\newcounter{notes}%
\definecolor{darkgreen}{rgb}{0.0, 0.5, 0.0}
\newtheorem{theorem}{Theorem}[section]
\newtheorem{corollary}[theorem]{Corollary}
\def\gap{\vspace{.3cm}\noindent}
\def\smallskip{\vspace{.15cm}}
\def\medskip{\vspace{.3cm}}
\def\text{\mbox}
\def\rh2{{\mathbb R}{\mathbb H}^2}
\def\ch2{{\mathbb C}{\mathbb H}^2}
\def\RR{{\mathbb R}}
\def\RP{{\mathbb{RP}}}
\def\SL{\operatorname{SL}}
\def\PO{\operatorname{PO}}
\def\H2R{{\mathbb H}^2\times {\mathbb R}}
\def\C2{\operatorname{C^2}}
\definecolor{back}{RGB}{255,255,255}
\definecolor{fore}{RGB}{0,0,0}
\definecolor{title}{RGB}{255,0,90}
\definecolor{green}{rgb}{0.0, 0.5, 0.0}
\definecolor{purple}{rgb}{0.5, 0.0, 0.5}
\definecolor{bluegreen}{rgb}{0.0,0.5, 0.5}
\definecolor{orange}{rgb}{1,0.5, 0.1}
\definecolor{redgreen}{rgb}{0.5, 0.5, 0.0}
\def\blue{\color{blue}}
\def\red{\color{red}}
\def\green{\color{green}}
\def\blue{\color{blue}}
\def\red{\color{red}}
\def\green{\color{green}}
\def\g2{{\green 2}}
\newcommand{\bv}{\left[\begin{array}{c}}
\newcommand{\ev}{\end{array}\right]}
\newcommand{\bbmat}{\begin{bmatrix}} 
\newcommand{\ebmat}{\end{bmatrix}}
\newcommand{\bmat}{\begin{matrix}} 
\newcommand{\emat}{\end{matrix}}
\newcommand{\bpmat}{\begin{pmatrix}} 
\newcommand{\epmat}{\end{pmatrix}}
\title{The Heisenberg Group acts on a strictly convex domain.}
\author{Daryl Cooper}
\date{\today}
\thanks{{The author acknowledges support from U.S. National Science Foundation grants DMS 1107452, 1107263, 1107367 ``RNMS: GEometric structures And Representation varieties'' (the GEAR Network). \\}
Cooper was partially supported by NSF grants DMS 1065939, 1207068 and  1045292\\
MSC 57N16, 57M50
}
\address{Department of Mathematics, University of California, Santa Barbara, CA 93106, USA}
\address{}
\email[]{cooper@math.ucsb.edu}
\begin{document}
\maketitle

Every linear group acts by isometries on some properly convex domain in real projective space. This follows
from the fact that action of $\SL(n,\RR)$ on the space of quadratic form in $n$ variables preserves the
properly convex cone consisting of positive definite forms. If $\Gamma$ is the holonomy of a properly
convex orbifold of finite volume then every virtually nilpotent group is virtually abelian, moreover every
unipotent element is conjugate into $\PO(n,1)$. A reference for all this is \cite{CLT1}.
 This paper gives the first example of a unipotent group
that is not virtually abelian and preserves a strictly convex domain. It answers a question asked
by Misha Kapovich.

The {\em Heisenberg group} is the subgroup  $H\subset \SL(3,\RR)$ of unipotent upper-triangular matrices. Define $\theta:H\rightarrow \SL(10,\RR)$ and $G=\theta(H)$ where
$$
\theta\bpmat
1 & a & c\\
0 & 1 & b\\
0 & 0 & 1\epmat
=\bpmat
1  &  2 a  &  2 c  &  a  &  a^2/2  &  a^3/6  &  b  &  2 a^2 + b^2/2  &  b^3/6 + 2 a c  &  
  {\red (a^4 + b^4)/24 + c^2}\\ 
  0  &  1  &  b  &  0  &  0  &  0  &  0  &  2 a  &  a b + c  &  b c\\
 0  &  0  &  1  &  0  &  0  &  0  &  0  &  0  &  a  &  c\\
 0  &  0  &  0  &  1  &  a  &  a^2/2  &  0  &  0  &  0  &  a^3/6\\
 0  &  0  &  0  &  0  &  1  &  a  &  0  &  0  &  0  &  a^2/2\\
 0  &  0  &  0  &  0  &  0  &  1  &  0  &  0  &  0  &  a\\
 0  &  0  &  0  &  0  &  0  &  0  &  1  &  b  &  b^2/2  &  b^3/6\\
 0  &  0  &  0  &  0  &  0  &  0  &  0  &  1  &  b  &  b^2/2\\
 0  &  0  &  0  &  0  &  0  &  0  &  0  &  0  &  1  &  b\\
 0  &  0  &  0  &  0  &  0  &  0  &  0  &  0  &  0  &  1\\
 \epmat
 $$
 It is clear that $\theta$ is injective and easy to check that it is a homomorphism.
 Since the center of $H$ is $Z\cong \RR$ and $H/Z\cong\RR^2$  it is also easy to check that every non-trivial element of $G$  
 has a unique largest Jordan block, and that this block has odd size. It easily follows that each element of $G$
 preserves some properly convex domain depending on that element, cf the discussion of parabolics in (2.9) of \cite{CLT1}.
  
 \begin{theorem} There is a strictly convex domain $\Omega\subset \RP^9$ that is preserved by $G$. This is an
 effective action of the Heisenberg group on $\Omega$ by parabolic isometries that are unipotent.
 \end{theorem}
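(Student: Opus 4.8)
The plan is to realize $\Omega$ as the region lying above the graph of a $G$-invariant strictly convex function in a suitable affine patch. First I note that the first column of $\theta(a,b,c)$ is $e_1$, so $[e_1]\in\RP^9$ is fixed by $G$; this will be the unique boundary fixed point responsible for the parabolic behaviour. Reading off the last row of $\theta$, the affine patch $A=\{x_{10}=1\}\cong\RR^9$ is preserved and $G$ acts on $A$ by affine maps. A direct reading of rows $3,6,9$ shows that the three coordinates $(x_6,x_9,x_3)$ transform by
\[
(x_6,x_9,x_3)\longmapsto (x_6+a,\; x_9+b,\; x_3+a\,x_9+c),
\]
which is exactly the standard simply transitive affine action of the Heisenberg group on $\RR^3$. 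In particular $G$ has no fixed point in $A$ at all, so the action on the eventual domain will be free in the interior.

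Next I would construct a $G$-invariant function on $A$. The coordinate $x_1$ is not invariant, but its cocycle $x_1\circ g-x_1$ is a polynomial in $(a,b,c)$ and in $x_2,\dots,x_9$, and a short telescoping computation produces ``lower'' invariants such as $u_5=x_5-\tfrac12 x_6^2$, and similarly $u_8=x_8-\tfrac12 x_9^2$, $w_4=x_4-x_6x_5+\tfrac13 x_6^3$, $w_7$, and a mixed invariant $u_2=x_2-x_9x_3-2x_6x_8+x_6x_9^2$. These together with the base coordinates $(x_6,x_9,x_3)$ already determine $x_2,\dots,x_9$, so there is a unique polynomial $P(x_2,\dots,x_9)$ making $\Phi:=x_1-P$ invariant. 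On the orbit of the origin all the lower invariants vanish and $x_1$ equals the $(1,10)$ entry, so there $P$ reduces to the coercive, strictly convex expression $\tfrac1{24}(x_6^4+x_9^4)+x_3^2$; this is where the shape of the red entry enters. Since adding any function $\Psi(u_2,w_4,u_5,w_7,u_8)$ to $P$ leaves $\Phi$ invariant, I keep this freedom, set $\widehat P=P+\Psi$, and define $\Omega\subset\RP^9$ to be the image of
\[
\Omega_A=\{x\in A: x_1>\widehat P(x_2,\dots,x_9)\}.
\]
As $G$ preserves $A$ and $\Phi$, it preserves $\Omega$.

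It remains to choose $\Psi$ and verify the three convexity properties, all of which are properties of the single function $\widehat P$: the region $\Omega_A$ is convex iff $\widehat P$ is convex; its image is properly convex iff $\widehat P$ is in addition coercive, in which case the only recession direction is the $x_1$-axis and the projective closure adds exactly the point $[e_1]$ at the top of the bowl; and $\Omega$ is strictly convex iff the graph of $\widehat P$ contains no line segment, i.e. $\widehat P$ is strictly convex. The base directions $x_3,x_6,x_9$ are already controlled by the leading term $\tfrac1{24}(x_6^4+x_9^4)+x_3^2$, which is strictly convex and coercive (note $x^4$ is strictly convex although its Hessian vanishes at $0$). The difficulty is the five fibre directions $x_2,x_4,x_5,x_7,x_8$: into these the cocycle $x_1\circ g-x_1$ contributes only linearly, so one must use $\Psi$, built from squares of the lower invariants, to make $\widehat P$ coercive and strictly convex in the fibre directions while keeping it convex overall. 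Since a term such as $u_5^2=(x_5-\tfrac12x_6^2)^2$ is strictly convex in $x_5$ but fails to be globally convex on its own, the coefficients must be balanced against the quartic base term. I expect the resulting global estimate $\mathrm{Hess}\,\widehat P\succeq0$ everywhere, with no nontrivial affine locus, to be the main obstacle.

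Finally, effectiveness is immediate from the injectivity of $\theta$. Each nontrivial element of $G$ is unipotent, fixes $[e_1]\in\partial\Omega$, and has no fixed point in $\Omega_A$ because the base action is free; moreover every finite boundary point has its base coordinate moved, so $[e_1]$ is the only boundary fixed point. Hence every nontrivial element acts as a parabolic, which would complete the proof.
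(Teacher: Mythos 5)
Your route --- realizing $\Omega$ as the epigraph of a $G$-invariant convex function --- is genuinely different from the paper's, but it has a gap precisely at the step you defer as ``the main obstacle,'' and that step is the entire content of the theorem: you never exhibit a $\Psi$ for which $\widehat P$ is convex, superlinearly coercive, and strictly convex. Worse, the natural choices appear to fail. Take $\Psi\supset\lambda u_2^2$ with $u_2=x_2-x_9x_3-2x_6x_8+x_6x_9^2$. Then $\mathrm{Hess}(\lambda u_2^2)=2\lambda u_2\,\mathrm{Hess}(u_2)+2\lambda\,\nabla u_2\,(\nabla u_2)^{T}$, and $\mathrm{Hess}(u_2)$ contains the constant off-diagonal entry $\partial^2 u_2/\partial x_6\partial x_8=-2$ with zero diagonal, so it is indefinite with eigenvalues bounded away from $0$. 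At a point with $x_2=T\to\infty$ and all other coordinates bounded, $u_2\approx T$, the rank-one term stays $O(1)$, the base term $(x_6^4+x_9^4)/24+x_3^2$ and the remaining invariants (which do not involve $x_2$) contribute $O(1)$, and $P$ itself is essentially linear in $x_2$ (its cocycle is linear in the coordinates); hence $\mathrm{Hess}\,\widehat P$ acquires a negative eigenvalue of order $\lambda T$ in the $(x_6,x_8)$-plane and $\widehat P$ is not convex. If instead you take $\Psi$ with bounded derivative in $u_2$ to avoid this, then $\widehat P$ grows at most linearly in $x_2$, and then the projective closure of the epigraph meets $P_\infty$ in more than the single point $[e_1]$ (note also that coercivity alone never suffices here: the closure of $\{y>\sqrt{1+x^2}\}$ in $\RP^2$ contains a whole segment at infinity; you need superlinear growth in every direction). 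So the approach as formulated is not merely unfinished --- the specific family of candidates you propose seems obstructed, and a substantially different invariant or a non-graphical construction is needed.

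It is worth seeing how the paper sidesteps all of this. It never constructs an invariant convex function: it takes $\Omega^+$ to be the interior of the convex hull of the single orbit $S=G\cdot p$ of the origin, so convexity and invariance are automatic; properness follows by separating the closure from that of the hyperplane $x_1=-1$; the closure meets $P_\infty$ only at $q=[e_1]$ because $(a^4+b^4)/24+c^2$ dominates every other coordinate of the orbit; and strict convexity is not proved by hand at all but imported from (5.8) of \cite{CLT1}, which produces a strictly convex invariant subdomain whenever a unipotent group preserves a properly convex domain whose closure meets infinity in a single fixed point. The most economical repair of your argument is to keep your correct observations about the affine action and the fixed point $[e_1]$, produce \emph{any} invariant properly convex domain with that single point at infinity (the convex hull of an orbit being the cheapest), and then invoke that general result rather than attempting a global Hessian estimate.
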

 \begin{proof} The group $G$ acts affinely on the affine patch $[x_1:x_2:x_3:x_4:x_5:x_6:x_7:x_8:x_9:1]$ that
 we identify with $\RR^9$. Let $p\in \RR^9$ be the origin. Then $G\cdot p$ is
 $$( {\red (a^4 + b^4)/24 + c^2},bc,c,a^3/6,a^2/2,a,b^3/6,b^2/2,b)$$
 This orbit is an algebraic embedding $\RR^3\hookrightarrow\RR^9$ which limits on the single point 
 $$q=[1:0:0:0:0:0:0:0:0:0]\in\RP^9$$ in the hyperplane at infinity, $P_{\infty}$.
 This follows from the fact that $\red (a^4 + b^4)/24 + c^2$ dominates
  all the other entries whenever at least one of $|a|,|b|,|c|$
 is large. 
 
 Let $S\subset\RR^9$ be this orbit. Choose 10 random points on $S\subset\RP^9$ and compute the determinant, 
 $d$, of
 the corresponding 10 vectors in $\RR^{10}$. Then $d\ne 0$ therefore the interior $\Omega^+\subset\RR^9$ of the convex hull of $S$ has dimension 9. 
 
 Moreover the closure $\Omega'$ of $\Omega^+$ in $\RP^9$ is disjoint from
  the closure of the affine hyperplane $x_1=-1$, hence $\Omega^+$ is properly convex. 
 Since $\Omega'\cap P_{\infty}=q$ and $G$ preserves $q$ and $P_{\infty}$ and $G$ is unipotent, 
 it follows from (5.8) in \cite {CLT1} that $G$ preserves
 some strictly convex domain $\Omega\subset\Omega'$.
 \end{proof}
 
\begin{corollary} There is a strictly convex real projective manifold $\Omega/\Gamma$ of dimension 9
with nilpotent fundamental group $\Gamma\cong\langle \alpha,\beta:[\alpha,[\alpha,\beta]],[\beta,[\alpha,\beta]]\rangle$ that is not virtually abelian. Moreover $\Gamma$ is unipotent. \end{corollary}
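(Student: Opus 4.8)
The plan is to realize $\Gamma$ as the image under $\theta$ of the integer Heisenberg group $H_{\ZZ}\subset H$, the subgroup of integer unipotent upper-triangular $3\times3$ matrices. This group is generated by the two elementary parabolics $\alpha$ (with $a=1$, $b=c=0$) and $\beta$ (with $b=1$, $a=c=0$); their commutator $[\alpha,\beta]$ is the central generator (the matrix with $c=1$, $a=b=0$), which commutes with both, so $H_{\ZZ}$ admits exactly the presentation $\langle\alpha,\beta:[\alpha,[\alpha,\beta]],[\beta,[\alpha,\beta]]\rangle$. Since $\theta$ is an injective homomorphism, $\Gamma=\theta(H_{\ZZ})\cong H_{\ZZ}$ inherits this presentation. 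The group $H_{\ZZ}$ is nilpotent of class $2$ and is not virtually abelian (its commutator subgroup $\langle[\alpha,\beta]\rangle\cong\ZZ$ is infinite and central, and any finite-index subgroup still contains noncommuting powers of $\alpha$ and $\beta$), while $\Gamma\subset G$ is unipotent because $G$ is. Thus the purely group-theoretic assertions are immediate, and $\Gamma$ preserves the strictly convex domain $\Omega$ furnished by the Theorem.

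It remains to show that $\Omega/\Gamma$ is a manifold, i.e. that $\Gamma$ acts properly discontinuously and freely on $\Omega$. First I would check that $\Gamma$ is discrete. The parameters $a,b,c$ are recovered as polynomial functions of the matrix entries of $\theta(a,b,c)$ (for instance from the $(3,9)$, $(2,3)$ and $(3,10)$ entries), so $\theta^{-1}$ is continuous on $G$ and $\theta$ is a homeomorphism onto its image. Consequently a sequence $\theta(\gamma_n)\to I$ with $\gamma_n\in H_{\ZZ}$ forces $\gamma_n\to 1$ in $H$, and discreteness of $H_{\ZZ}$ gives $\gamma_n=1$ eventually; hence $\Gamma$ is discrete in $\SL(10,\RR)$, and since a nontrivial unipotent matrix is never scalar, also in $\PGL(10,\RR)$.

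Finally I would invoke the Hilbert metric $d_\Omega$, a complete proper geodesic metric on the properly convex domain $\Omega$ on which every projective automorphism of $\Omega$ acts by isometries. The isometry group of a proper metric space acts properly, so the discrete subgroup $\Gamma$ acts properly discontinuously on $\Omega$; in particular every point stabilizer is finite. Since each nontrivial element of $\Gamma$ is unipotent and therefore of infinite order, the only finite subgroup of $\Gamma$ is trivial, so the action is free. Hence $\Omega/\Gamma$ is a strictly convex real projective manifold of dimension $9$ with the stated fundamental group. The main obstacle is this last geometric step: confirming proper discontinuity, for which the cleanest route is through properness of the isometric action on the Hilbert metric together with the discreteness established above, after which freeness follows automatically from unipotence.
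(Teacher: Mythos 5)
Your proposal is correct and follows essentially the same route as the paper, whose entire proof is the one-line observation that a lattice $\Gamma$ in $G$ (such as $\theta(H_{\ZZ})$) yields the desired quotient $\Omega/\Gamma$. You have simply filled in the standard details the paper leaves implicit: the presentation of the integer Heisenberg group, discreteness of $\theta(H_{\ZZ})$, proper discontinuity via the properness of the Hilbert metric, and freeness from unipotence.
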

\begin{proof}If $\Gamma$ is a lattice in $G$ then $\Omega/\Gamma$ is a strictly convex manifold with unipotent holonomy and
$\Gamma$ is nilpotent but not virtually abelian.\end{proof}

\gap

The genesis of this example is as follows. The image of $H$ in $\SL(6,\RR)$ under the irreducible representation
$\SL(3,\RR)\rightarrow\SL(6,\RR)$ is 
$$\left(\begin{array}{cccccc}
1 &  2 a &  {\red a^2} &  2 c &  2 a c &  {\blue c^2}\\ 
0 &  1 &  a &  b &  a b + c &  b c\\ 
0 &  0 &  1 &   0 &  2 b &  {\red b^2}\\ 
0 &  0 &  0 &  1 &  a &  c\\ 
0 &  0 &  0 &  0 &  1 &  b\\ 
0 &  0 &  0 &  0 &  0 &  1\\
  \end{array}\right)
  $$
and   preserves the properly convex domain $Q\subset\RP^5$ that is the projectivization
of the space of positive definite quadratic forms on $\RR^3$.

  The boundary of the closure of $Q$ consists of
semi-definite forms and contains flats, so $Q$ is {\em not strictly convex}. Let $A,B,C\in \SL(6,\RR)$ be the  
elements corresponding to one of $a,b,c$ being $1$ and the others 0. Each of $A,B,C$ has a parabolic fixed point in
$\partial Q$ corresponding to a rank 1 quadratic form. Every point in $Q$ converges to this parabolic
fixed point under iteration by the given group element. The fixed point for $A$ and $B$ are distinct and  lie in a flat
in $\partial Q$. 

The idea is to increase the dimension of the representation and use the extra dimensions
to add parabolic blocks of size $5$ onto $A$ (row 1 and rows 7-10) and onto $B$ (row 1 and rows 11-14) 
that commute and the parabolic fixed point of each block is the
rank-1 form that is a fixed point of $C$. This gives a 14-dimensional representation of $H$:

$$\left(\begin{array}{cccccccccccccc}
1 &  2 a &  {\red a^2} &  2 c &  2 a c &  {\blue c^2} &  a &  a^2/2 &  a^3/6 &  {\blue a^4/24} &  b &  b^2/2 &   b^3/6 &  {\blue b^4/24}\\
 0 &  1 &  a &  b &  a b + c &  b c &  0 &  0 &  0 &  0 &  0 &  0 &  0 &  0\\
 0 &  0 &  1 &  0 &  2 b &  {\red b^2} &  0 &  0 &  0 &  0 &  0 &  0 &  0 &  0\\
 0 &  0 &  0 &  1 &  a &  c &  0 &  0 &  0 &  0 &  0 &  0 &  0 &  0\\
 0 &  0 &  0 &  0 &  1 &  b &  0 &  0 &  0 &  0 &  0 &  0 &  0 &  0\\
 0 &  0 &  0 &  0 &  0 &  1 &  0 &  0 &  0 &  0 &  0 &  0 &  0 &  0\\
 0 &  0 &  0 &  0 &  0 &  0 &  1 &  a &  a^2/2 &  a^3/6 &  0 &  0 &  0 &  0\\
 0 &  0 &  0 &  0 &  0 &  0 &  0 &  1 &  a &  a^2/2 &  0 &  0 &  0 &  0\\
 0 &  0 &  0 &  0 &  0 &  0 &  0 &  0 &  1 &  a &  0 &  0 &  0 &  0\\
 0 &  0 &  0 &  0 &  0 &  0 &  0 &  0 &  0 &  1 &  0 &  0 &  0 &  0\\
 0 &  0 &  0 &  0 &  0 &  0 &  0 &  0 &  0 &  0 &  1 &  b &  b^2/2 &  b^3/6\\
 0 &  0 &  0 &  0 &  0 &  0 &  0 &  0 &  0 &  0 &  0 &  1 &  b &  b^2/2\\
 0 &  0 &  0 &  0 &  0 &  0 &  0 &  0 &  0 &  0 &  0 &  0 &  1 &  b\\
 0 &  0 &  0 &  0 &  0 &  0 &  0 &  0 &  0 &  0 &  0 &  0 &  0 &  1\\
\end{array}\right)$$

The top-left $6\times 6$ block is the image of $H$ in $\SL(6,\RR)$.
The entries in $A^n$ and $B^n$ {\red grow like $n^2$}. This is beaten
by the growth of some entries in the added blocks of size $5$ which {\blue grow like $n^4$}. This gives rise to a representation
of $H$ of dimension $6+4+4=14$. The orbit of 
$$[0: 0: 0: 0: 0: 1: 0: 0: 0: 1: 0: 0: 0: 1]$$
is
$$[{\blue(a^4 + b^4)/24 + c^2}: 
 b c: b^2: c: b: 1: a^3/6: a^2/2: a: 1: b^3/6: b^2/2: b: 1]$$
 so there is a codimension-4 projective hyperplane that is preserved, and which is defined by
$$x_6=x_{10}=x_{14}\qquad x_5=x_{13}\qquad x_3=2x_{12}$$ The
restriction to this hyperplane gives $\theta$.

\end{document}